\newcommand{\ZZ}{\mathbf{Z}}
\newcommand{\QQ}{\mathbf{Q}}
\newcommand{\HH}{\mathbf{H}}
\newcommand{\CC}{\mathbf{C}}
\DeclareMathOperator{\SL}{SL}
\newcommand{\Magma}{{\sc Magma}}
\newcommand{\Sage}{{\sc Sage}}
\newtheorem{theorem}{Theorem}
\newtheorem{definition}[theorem]{Definition}
\newtheorem{conjecture}[theorem]{Conjecture}
\newtheorem{remark}[theorem]{Remark}
\newtheorem{question}[theorem]{Question}
\title{On generalized modular forms supported on cuspidal and elliptic points}
\author{L J P Kilford and Wissam Raji}
\begin{document}
\maketitle

\begin{abstract}
In this paper, we extend previous results to prove that generalized modular
forms with rational Fourier expansions whose divisors are supported only at
the cusps and certain other points in the upper half plane are actually
classical modular forms. We discuss possible limitations to this extension and
pose questions about possible zeroes for modular forms of prime level.
\end{abstract}
\section{Introduction}

In a paper about the zeroes of Eisenstein series~\cite{modular-forms-zeroes-quote}, we have the following quote:
\begin{quote}
In the ample theory of classical modular forms, little attention seems to have been paid to their zeros.
\end{quote}
In this note we will attempt to remedy this lack by doing  explicit
computations to find the zeroes of classical modular forms. This is motivated by recent results in the theory of generalized modular forms which use knowledge of the position of the zeroes to prove classification results.

\subsection{Previous work}
The standard application of knowledge of the zeroes of modular forms in the
literature is to compute the dimension of the space of modular forms for the
full modular group~$\SL_2(\ZZ)$, as in Section~3 of Chapter~7
of~\cite{serre-cours}. For spaces of low weight it is possible to locate all
of the zeroes which gives a sharp bound on the dimension, which leads to a
proof of the explicit dimension formula via the valence formula.

A more recent number-theoretic application comes from the $p$-adic world,
where one moves between spaces of overconvergent modular forms of weight~0 and
higher weights by multiplying or dividing by Eisenstein series. In order for
these maps to go from holomorphic forms to holomorphic forms one needs to be
sure of where the zeroes are; see Section~2 of~\cite{buzzard-kilford} for a
recent example of this, and~\cite{kilford-mcmurdy} for a forthcoming paper
where this is important.

There has been some work done on the zeroes of Eisenstein series for genus~0 congruence subgroups; this goes back to~\cite{rankin-swinnerton-dyer} which gives an extremely short and elegant proof is given that the zeroes of~$E_k(z)$ in the standard fundamental domain all lie on the circle $|z|=1$. A more complicated generalization to other level~1 modular forms is given in~\cite{getz}. There are also generalizations to other genus~0 Fuchsian groups in~\cite{hahn}, \cite{miezaki-nozaki-shigezumi}, \cite{shigezumi} and~\cite{garthwaite-et-al}.

However, for the applications we have in mind for this paper, these results are insufficient, for two reasons. Firstly, many of them give asymptotic results, and we will need to know exact locations of zeroes. This is not a critical failure, as there are formulae for zeroes in the literature.

More importantly, we will want to study congruence subgroups of genus at least~1 for reasons that will become clear later. The standard methods in the literature do not seem to generalize away from genus~0, so in this paper we will use explicit arguments and computations to locate the zeroes of modular forms of interest to us.
\subsection{$\eta$-quotients}
There are certain classical modular forms which have explicitly known
zeroes. The best-known examples of this are functions of the Dedekind
$\eta$-product, which we recall has Fourier expansion:
\[
\eta(z)=q^{1/24}\prod_{n=1}^\infty(1-q^n),\text{ where }q:=\exp^{2\pi i z} \text{ and }z \in \HH.
\]
It is well-known that there is a unique normalized cusp form of weight~12 for the full modular group $\SL_2(\ZZ)$, which has Fourier expansion at~$\infty$ given by
\[
\Delta(q)=\eta(q)^{24}=q\prod_{n=1}^\infty (1-q^n)^{24} = \sum_{n=1}^\infty \tau(n) q^n.
\]
The values of the Fourier coefficients~$\tau(n)$ are known as the \emph{Ramanujan $\tau$ function}, and have been extensively studied.

By an analysis of the valence formula in weight~12 (see Equation~\eqref{valence-formula}), we see that~$\Delta$ must have a unique zero at the cusp~$\infty$ and no other zeroes on the upper half plane.

We can also consider $\eta$-quotients of higher level, for congruence subgroups of~$\SL_2(\ZZ)$.  If~$N$ is a positive integer, we can show that an $\eta$-quotient is actually a classical modular function for~$\Gamma_0(N)$ by using Proposition~3.2.1 of~\cite{ligozat}, which says that an $\eta$-quotient~$\prod_{d|N} \eta(q^d)^{r_d}$ is a modular function of weight~0 for~$\Gamma_0(N)$ if
\begin{enumerate}
\item $\sum_{d|N} r_d \cdot d \equiv 0 \mod 24$,
\item $\sum_{d|N} r_d \cdot N/d \equiv 0 \mod 24$,
\item $\sum_{d|N} r_d = 0$ and
\item $\prod_{d|N} (N/d)^{r_d} \in \QQ^2$.
\end{enumerate}
This is known as ``Ligozat's criterion'' in the literature. There is a similar statement for higher weight forms.

A good reference for explicitly computing with $\eta$-products is Chapter~7 of Ken McMurdy's thesis~\cite{mcmurdy-thesis}. We will give an example based on one given in Section~7.4; we can also compute with $\eta$-quotients using {\sc Sage}~\cite{sage}.

We can define an~$\eta$-quotient for the genus~1 congruence subgroup~$\Gamma_0(11)$ by
\[
H_{11}:=\left(\frac{\eta(q)}{\eta(q^{11})}\right)^{12}.
\]
Using Ligozat's criterion we see that~$H_{11}$ is a classical modular function of weight~0 for~$\Gamma_0(11)$.

Now we can see by inspection of the Fourier expansion at~$\infty$ that this function has a pole of order~5 at the cusp~$\infty$, and as~$\Delta$ is non-zero on the upper half plane and~$H_{11}$ is a weight~0 modular function then it must have a zero of order~5 at the cusp~0. This tells us that its divisor is~$5(0)-5(\infty)$.

To deal with more complicated examples, where there are more primes dividing
the level, we need to consider pullbacks of maps between modular curves, which
are covered in detail in~\cite{mcmurdy-thesis}; this provides a good concrete
and algorithmic implementation of Section~2.4 of~\cite{shimura-automorphic}.
\subsection{Other examples}
We recall that an \emph{elliptic point~$P$} for a congruence subgroup~$\Gamma$ is a point of the upper half-plane with a nontrivial stabilizer group under M\"obius transformations. For the full modular group, the non-equivalent elliptic points can be shown to be~$i$ and~$\omega:=\frac{-1+\sqrt{-3}}{2}$.

In some other cases we can explicitly locate the zeroes of modular forms; this often uses the valence formula which we recall is (for~$f$ a nonzero modular form for a subgroup~$G$ of the modular group)
\begin{equation}
\label{valence-formula}
\sum_{\text{cusps of }G}v_c(f) +\frac{1}{2}v_i(f) +\frac{1}{3}v_\omega(f) +\sum_{p \in \Gamma\backslash\HH}v_p(f) = \frac{k \cdot [\SL_2(\ZZ):G]}{12},
\end{equation}
where the second sum is over elements of the fundamental domain of~$G$ which are not equivalent to the elliptic points~$i$ and~$\omega:=(-1+\sqrt{-3})/2$. It is a standard exercise to prove this using contour integrals around a fundamental domain; see for instance Theorem~3 of Chapter~VII.3 of~\cite{serre-cours} for a proof.

For instance, the level~1 Eisenstein series~$E_4$ and~$E_6$ are well-known to have a single zero each within the fundamental domain; from the residue formula, we can see that~$E_4(\omega)=E_6(i)=0$.

Similar calculations allow us to pin down the zeroes for a handful of cases, especially when we are dealing with modular forms for genus~0 subgroups. This can then be used for higher genus subgroups, because if~$f$ is a modular form for a genus~0 subgroup~$\Gamma$ then it is also a modular form for finite index subgroups of~$\Gamma$, which can have higher genus.

We care about the genus of the subgroup because in Knopp and Mason~\cite{knopp-mason} they prove a result (Theorem 2) that says that a generalized modular form~$F$ for a finite index subgroup of weight 0 corresponds to a classical modular form~$L$ of weight 2; further, if~$L$ is a cusp form, then~$F$ is an entire form. This is in stark contrast to the classical situation, where an entire classical modular form of weight~0 must be constant (Choose a point~$z_0$ in the upper half-plane. If~$f(z_0)=a$ then~$f(z)-a$ is a modular form with a zero at~$z_0$ and by the valence formula it must be identically zero, so~$f(z)=a$ as required).

\section{Extending the classification theorem}
We will need to consider the concept of a \emph{generalized} modular form.
\begin{definition}
Let~$\Gamma$ be a finite-index subgroup of the modular group. A generalized modular form of weight~0 for~$\Gamma$ is a holomorphic function~$f:\HH\rightarrow\CC$ which satisfies
\[
f(\gamma\tau)=\chi(\gamma)f(\tau)\text{ for }\gamma \in \Gamma,
\]
for some not necessarily unitary character~$\chi:\Gamma\rightarrow\CC^\star$, and which is also meromorphic at the cusps of~$\Gamma$.
\end{definition}
We can extend this definition to integral and half-integral weight generalized modular forms in the obvious way, by multiplication by classical forms of known weight.

As mentioned above, classical cusp forms of weight~2 for~$\Gamma$ correspond
to generalized modular forms of weight~0 with cuspidal divisor, so there are
many nontrivial examples of generalized modular forms which are \emph{not}
classical modular forms.

Here is an explicit example. Let~$f$ be an integral of weight~0, which
satisfies the equation
\begin{equation}
\label{gmf-period}
f(\gamma z) = f(z) + c,
\end{equation} 
for~$\gamma \in \Gamma$ and for some period~$c$. If we
exponentiate this, then we obtain a nontrivial entire generalized modular form of
weight~0 (in other words, it is not constant, so it is not a classical modular
form).

In~\cite{kohnen-mason-applications}, the following conjecture is made about generalized modular forms satisfying certain conditions.
\begin{conjecture}[\cite{kohnen-mason-applications}, Conjecture~(2)]
\label{km-conjecture}
Let~$N$ be a positive integer. If~$f$ is a generalized modular form for~$\Gamma_0(N)$ with Fourier expansion at~$\infty$ defined over~$\ZZ$ and the smallest nonzero Fourier coefficient normalized to be~1, then~$f$ is a classical modular form.
\end{conjecture}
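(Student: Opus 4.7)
The plan is to pass from~$f$ to a genuinely classical object via its logarithmic derivative and then use the integrality hypothesis to control the rest. Assume for concreteness that~$f$ has weight~0; higher weights reduce to this case by multiplying through by a suitable classical form. Set $L := \frac{1}{2\pi i} \cdot \frac{f'}{f}$. Differentiating the transformation law $f(\gamma z) = \chi(\gamma) f(z)$ eliminates the multiplier, so $L$ is a meromorphic classical modular form of weight~2 for~$\Gamma_0(N)$, with a pole at each zero or pole~$p$ of~$f$ in~$\HH$ whose residue is the integer vanishing order~$v_p(f)$. Writing $f = q^v + a_{v+1} q^{v+1} + a_{v+2} q^{v+2} + \cdots$ with $v \in \ZZ$ and all $a_i \in \ZZ$, the identity $f' = f L$ forces by induction that the $q$-expansion of~$L$ at~$\infty$ is also integral (up to a finite negative-power tail determined by the vanishing of~$f$ at the cusp).

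First I would handle the case in which~$f$ is zero-free on~$\HH$, so that~$L$ is holomorphic on~$\HH$ and merely meromorphic at the cusps. If~$L$ is a cusp form, then the Knopp--Mason correspondence gives that~$f$ is entire as a generalized form, and one then argues that integer Fourier coefficients with leading coefficient~1 force~$\chi$ to be of finite order: a non-torsion multiplier would, via exponentiation of a weight~0 integral with non-rational periods, introduce transcendental values into the Fourier expansion. Hence some power~$f^n$ is classical on~$\Gamma_0(N)$, and~$f$ itself is classical for a suitable congruence cover. When~$L$ is non-cuspidal, subtract off the classical weight~2 Eisenstein series on~$\Gamma_0(N)$ matching~$L$'s residue data at the cusps and repeat on the cuspidal remainder.

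The main obstacle is handling zeroes of~$f$ at an arbitrary point~$p$ in the interior of a fundamental domain. The strategy executed in the body of this paper is to divide~$f$ by an explicit classical modular form with a matching zero at~$p$, reducing to the zero-free case above: this is feasible when~$p$ is a cusp (take an $\eta$-quotient of the correct divisor via Ligozat's criterion) or an elliptic point (take an appropriate Eisenstein series or product thereof whose zero locus has been pinned down using the valence formula~\eqref{valence-formula}). For a generic~$p$ no such explicit classical form presents itself, and even if one could construct an auxiliary form vanishing at~$p$, the quotient~$f/(\text{aux})$ need not retain integer Fourier coefficients, so the induction on the number of non-cuspidal zeroes breaks. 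Settling the conjecture in full generality would therefore seem to demand a genuinely Diophantine input, ruling out an integral normalized $q$-expansion that vanishes at a generic upper-half-plane point; this is the step I would not expect to handle by elementary means, and it is precisely the step that the divisor-restricted formulation in this paper is designed to avoid.
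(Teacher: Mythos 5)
You were asked to prove a statement that the paper itself records only as a conjecture (Kohnen--Mason, Conjecture~(2)): the paper contains no proof of it, and proves only special cases in which the divisor of~$f$ is restricted --- first the case of divisor supported at the cusps, which it quotes as Theorem~\ref{wissam-theorem} from Kohnen--Mason, and then an extension to divisors supported at cusps and elliptic points (with equal vanishing at the elliptic points), obtained by explicitly locating the zeroes of auxiliary forms such as the weight-1 form on~$\Gamma_0(31)$ and the forms in Figure~\ref{gamma-0-n-table}, dividing out, and invoking Theorem~\ref{wissam-theorem}. Your write-up follows essentially this outline but then, by your own admission, stops exactly at the case the restriction is designed to exclude: a zero of~$f$ at a generic point of~$\HH$. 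Since nothing in the hypotheses of the conjecture prevents such zeroes, what you have is a reduction and a programme, not a proof; a complete argument here would be a new result beyond anything in the paper or its references.

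There are also gaps inside the cases you do treat. In the zero-free case, the claim that an integral $q$-expansion with leading coefficient~1 forces the character~$\chi$ to have finite order is asserted via a vague appeal to ``transcendental values'' arising from non-rational periods; that is precisely the nontrivial arithmetic content of the Kohnen--Mason results (which need rationality together with $p$-integrality for almost all~$p$ and a genuine argument), so it cannot be waved through --- you should simply have invoked Theorem~\ref{wissam-theorem} at this point. Moreover the conclusion you extract there, that some power~$f^n$ is classical and~$f$ is classical ``for a suitable congruence cover'', is strictly weaker than the conjecture, which asserts that~$f$ itself is classical on~$\Gamma_0(N)$. Finally, subtracting a weight-2 Eisenstein series from~$L$ to remove cuspidal residues amounts, on the level of~$f$, to dividing by a classical form with prescribed cuspidal divisor; the existence of such a form (an $\eta$-quotient via Ligozat's criterion, or the explicit forms of Figure~\ref{gamma-0-n-table}) is available only for the restricted divisors the paper actually handles, so this step does not buy you any generality beyond the published special cases.
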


In this section we will consider results which prove Conjecture~\ref{km-conjecture} in special cases, when we restrict the divisor of the modular forms we consider.

We recall the definition of the second Bernoulli function:
\[
P_2(t)=\{t\}^2-\{t\}+1/6,
\]
where~$\{t\}:=t-\lfloor t \rfloor$ is the fractional part of the rational number~$t$.

Finding the zeroes of a modular form is of interest to us because
in~\cite{raji-gmf} there is the following result, which we will call the
\emph{classification theorem}:
\begin{theorem}[\cite{kohnen-mason-new-paper}, Corollary~2]
\label{wissam-theorem}
Let~$N$ be a positive integer and let~$f$ be a generalized modular form for~$\Gamma_0(N)$. 
If the Fourier coefficients of~$f$ at the cusp~$\infty$ are rational and $p$-integral for all but a finite number of primes~$p$ and that the poles and zeroes of~$f$ are supported at the cusps, then~$f$ is a \emph{classical} modular form.
\end{theorem}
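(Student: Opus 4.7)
The plan is to take the logarithmic derivative and reduce to a classical weight~$2$ modular form. Set
\[
L(z):=\frac{1}{2\pi i}\cdot\frac{f'(z)}{f(z)}.
\]
Since $f$ is a GMF with multiplier $\chi$, the factor $\chi(\gamma)$ cancels when one differentiates $\log f$, and a direct computation using $\gamma'(z)=(cz+d)^{-2}$ shows that $L$ transforms as a classical weight~$2$ form on $\Gamma_0(N)$. The hypothesis that all zeroes and poles of $f$ lie at the cusps means $L$ is holomorphic on $\HH$; a local calculation at each cusp $s$ of width $h_s$ shows that the expansion of $L$ there begins with the constant $m_s/h_s$, where $m_s\in\QQ$ is the order of $f$ at $s$. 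Thus $L\in M_2(\Gamma_0(N))$, and by the standard logarithmic-differentiation formulas its $q$-expansion coefficients are rational and $p$-integral for all but finitely many $p$.

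The next step is to construct an explicit classical form $g$ on $\Gamma_0(N)$ whose divisor matches that of $f$. The rationality and $p$-integrality of the $q$-expansion of $f$ should first be used to upgrade each order $m_s$ from a rational number to an integer: the leading exponent $m_\infty$ is forced into $\ZZ$ by $p$-integrality at infinitely many primes, and the analogous conclusion at the other cusps follows by applying the same reasoning to $f\circ\gamma$ for suitable coset representatives $\gamma$. Once the $m_s$ are integers, $\mathrm{div}(f)=\sum_s m_s(s)$ is a degree-zero divisor supported on the cusps of $X_0(N)$, hence torsion in the Jacobian by Manin--Drinfeld; so some integer multiple $\mathrm{div}(f^r)$ is the divisor of a classical modular unit $g$ on $\Gamma_0(N)$, which in favourable cases can be written explicitly as an $\eta$-quotient via Ligozat's criterion.

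The final step is to show that $F:=f^r/g$ is constant. By construction $F$ is a weight~$0$ GMF on $\Gamma_0(N)$ that is nonvanishing and holomorphic on $\HH$ and at every cusp. Applying the Knopp--Mason correspondence (Theorem~$2$ of~\cite{knopp-mason}) produces a classical weight~$2$ companion form $\widetilde L$; since $F$ has neither zeroes nor poles at the cusps, $\widetilde L$ is a cusp form (the Eisenstein piece of $rL$ is precisely what has been absorbed into $\frac{1}{2\pi i}\, g'/g$). The rationality and $p$-integrality hypothesis, now applied to $F$, must force $\widetilde L=0$: any nonzero weight~$2$ cuspidal contribution would introduce Eichler-integral periods into the $q$-expansion of $F$ that are incompatible with rational, cofinitely $p$-integral coefficients. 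Hence $F$ is constant, and $f^r$---and therefore $f$ itself---is a classical modular form.

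I expect the hard part to be this last step: turning ``rational, cofinitely $p$-integral Fourier coefficients'' into the vanishing of the cuspidal companion $\widetilde L$. Passing to the log derivative and constructing $g$ are essentially formal once Manin--Drinfeld and the Knopp--Mason correspondence are in hand, but ruling out a cuspidal contribution requires a genuine arithmetic rigidity statement for generalized modular forms, presumably in the spirit of the techniques of~\cite{kohnen-mason-new-paper} cited in the statement itself; this is where the real work of the classification theorem lies.
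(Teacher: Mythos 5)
A preliminary point: the paper you are being compared against does not actually prove this theorem. It is imported as Corollary~2 of~\cite{kohnen-mason-new-paper}, and the remark following it only records that in special cases (e.g.\ $N$ squarefree) there are proofs, such as Theorem~3 of~\cite{raji-gmf}, which explicitly construct a classical form with rational Fourier expansion and the prescribed cuspidal divisor and then divide it out. Measured against those arguments, your skeleton is the standard one: pass to $L=\frac{1}{2\pi i}f'/f$, manufacture a modular unit $g$ with $\mathrm{div}(g)=\mathrm{div}(f^{r})$ (by Ligozat-type $\eta$-quotients or Manin--Drinfeld), and reduce to showing that the nowhere-vanishing weight-$0$ generalized modular form $F=f^{r}/g$ is classical, hence constant.

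The difficulty is that, as a proof, your proposal stops exactly where the theorem begins, and you say so yourself. The assertion that rational, cofinitely $p$-integral coefficients force the cuspidal companion $\widetilde L$ of $F$ to vanish \emph{is} the theorem in the case of trivial divisor; deferring it to ``the techniques of~\cite{kohnen-mason-new-paper}'' makes the argument circular, since that is the very result being proved. Nothing in your outline rules out that $\exp$ of the Eichler integral of a nonzero weight-$2$ cusp form could still have rational, almost-everywhere $p$-integral coefficients --- excluding this is the genuine arithmetic rigidity input, and no mechanism for it is offered. There is also a secondary gap earlier: you promote the cuspidal orders $m_s$ to integers at cusps other than $\infty$ ``by applying the same reasoning to $f\circ\gamma$'', but the hypothesis controls only the expansion at $\infty$; at another cusp the local exponent is governed by the value of the non-unitary character $\chi$ on the parabolic generator fixing that cusp, a priori an arbitrary element of $\CC^\star$, and the expansion of $f\circ\gamma$ has no rationality to exploit. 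Showing that the divisor is an honest integral divisor supported on the cusps (so that Manin--Drinfeld or an explicit $\eta$-quotient applies at all) is part of the real work, not a formality; this is one reason the explicit constructions in~\cite{raji-gmf} are carried out only under extra hypotheses on $N$.
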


We note that in the literature that a classical modular function which
satisfies this condition is said to have a \emph{Heegner divisor}.

We also note that there are classification theorems for other standard
congruence subgroups such as~$\Gamma_1(N)$ and~$\Gamma(N)$. In this paper we
are mostly concerned with $\Gamma_0(N)$ as the other groups do not
have elliptic points for $N > 2$.

\begin{remark} 
This important theorem generalizes other work by several authors.
If the congruence subgroup $\Gamma_0(N)$ satisfies certain conditions, such as $N$ being
squarefree, then there are proofs of the theorem which explicitly create a classical
modular form with rational Fourier expansion and with the correct poles and
zeroes supported at the cusps. A good example of this is Theorem~3 of~\cite{raji-gmf}.
\end{remark}
In particular, this result is non-empty for~$N=11$, because by the result of Knopp and Mason quoted above there are nontrivial entire generalized modular forms for~$\Gamma_0(11)$ (which has genus~1); we can use it to show that the form~$H_{11}$ we defined earlier is modular based only on the knowledge of its zeroes and the fact that its Fourier expansion at~$\infty$ is rational.

It is important to note that the conditions in the theorem on the rationality
of the Fourier expansion of $f$ are necessary; if they are weakened then there
are examples of generalized modular forms of weight~0 with no zeroes which are not classical modular forms.

We can also prove the following result:

\begin{theorem}
Let~$F(z)$ be an Eichler integral of weight 0 on a subgroup of genus zero of
the full group.  If the period~$c$ in~\eqref{gmf-period} is pure imaginary or
zero at the parabolic elements of~$\Gamma$, then it is either pure imaginary or~0 for all $\gamma \in \Gamma$.
\end{theorem}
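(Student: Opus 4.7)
The plan is to exploit the fact that in weight~$0$ the assignment $\gamma \mapsto c_\gamma$ defined by $F(\gamma z)=F(z)+c_\gamma$ is a group homomorphism $c : \Gamma \to (\CC,+)$. Once this is in hand, the genus~$0$ hypothesis will reduce the question to the values of $c$ on a finite set of parabolic generators.

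First I would check additivity: applying \eqref{gmf-period} twice gives
\[
F(\gamma_1\gamma_2 z) = F(\gamma_2 z) + c_{\gamma_1} = F(z) + c_{\gamma_1} + c_{\gamma_2},
\]
so $c_{\gamma_1\gamma_2} = c_{\gamma_1} + c_{\gamma_2}$. Because the target group $(\CC,+)$ is torsion-free, every element of $\Gamma$ of finite order lies in the kernel of $c$; in particular all elliptic elements, together with $-I$ if it happens to lie in $\Gamma$, map to $0$.

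Next I would invoke the standard signature presentation of a genus~$0$ Fuchsian group. Passing to $\bar\Gamma := \Gamma/(\Gamma \cap \{\pm I\})$, which acts effectively on $\HH$, a signature $(0; m_1,\ldots,m_r; s)$ means that $\bar\Gamma$ is generated by elliptic elements $e_1,\ldots,e_r$ of orders $m_1,\ldots,m_r$ and parabolic elements $p_1,\ldots,p_s$, subject only to the relations $e_i^{m_i}=1$ and $e_1\cdots e_r p_1 \cdots p_s = 1$. Any $\gamma \in \Gamma$ therefore descends to a word in these generators, and $c(\gamma)$ expands as an integer linear combination of the $c(e_i)=0$ and the $c(p_j)$. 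By hypothesis each $c(p_j)$ is purely imaginary or zero, and since $i\mathbf{R}$ is a subgroup of $(\CC,+)$, we conclude $c(\gamma) \in i\mathbf{R}$, which is exactly what we wanted.

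The step requiring most care is confirming that the genus~$0$ presentation has no hidden relations beyond those coming from the elliptic orders and the single long relation; this is classical but should be cited cleanly. The only other bit of bookkeeping is passing between $\Gamma$ and $\bar\Gamma$ according to whether $-I \in \Gamma$, which introduces no new ideas because $c(-I)=0$ anyway. Note that the argument uses no analytic information about $F$ beyond the weight-$0$ cocycle identity, and it fails as soon as the genus becomes positive, since then the abelianization of $\bar\Gamma$ has extra free generators whose $c$-values are unconstrained by the parabolic hypothesis.
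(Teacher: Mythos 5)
Your argument is correct, but it is genuinely different from the paper's proof. You work entirely on the group-theoretic side: in weight~$0$ the period map $\gamma \mapsto c_\gamma$ is a homomorphism $\Gamma \to (\CC,+)$, it kills torsion (hence all elliptic generators and $-I$), and the standard signature presentation $(0; m_1,\ldots,m_r; s)$ of a genus-zero Fuchsian group then expresses every period as an integer combination of the periods of the parabolic generators, which lie in $i\mathbf{R}$ by hypothesis; since $i\mathbf{R}$ is a subgroup of $(\CC,+)$ you are done. The paper instead exponentiates: setting $G = e^{F}$ produces an entire parabolic generalized modular form of weight~$0$ with multiplier $e^{c}$ (unitary at parabolic elements by hypothesis), and then Theorem~2 of Knopp and Mason \cite{knopp-mason} --- every entire parabolic generalized modular form on a genus-zero subgroup is classical --- forces $|e^{c_\gamma}|=1$, i.e.\ $\mathrm{Re}(c_\gamma)=0$, for all $\gamma$. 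Your route is more elementary and self-contained: it uses no analytic input beyond the weight-$0$ cocycle identity, needs the hypothesis only at one parabolic generator per cusp, and makes completely explicit where genus zero enters (no hyperbolic handle generators, so the abelianization is generated by elliptic and parabolic classes); the one item you should nail down is a clean citation for the signature presentation (it is classical, e.g.\ in standard references on Fuchsian groups). The paper's route buys brevity and thematic coherence --- it exhibits the statement as an immediate corollary of the classification of generalized modular forms that the paper is built around --- at the cost of relying on the Knopp--Mason machinery and of leaving implicit the verification that $e^{F}$ is indeed entire (i.e.\ well behaved at the cusps).
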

\begin{proof}
Let $G(z)=e^{F(z)}$.  Then $G(z)$ is an entire parabolic generalized modular
form (PGMF) of weight 0 with multiplier system $e^{c}$.  Since every entire PGMF on a
subgroup of genus zero is classical (by Theorem~2 of~$\cite{knopp-mason}$)
then $\mid e^{c}\mid=1$, as required.
\end{proof}

We can extend this result to characterize modular forms which are of the form $E_4^a \cdot E_6^b \cdot F(\eta)$ where~$F(\eta)$ is an $\eta$-quotient. $E_4$ as a modular form for $\SL_2(\ZZ)$ has a unique zero at $\omega$ and $E_4$ as a modular form for $\SL_2(\ZZ)$ has a unique zero at $i$; this means that if we look at them as forms for $\Gamma_0(N)$, say, then they have zeroes at all of the points which are equivalent to $\omega$ or $i$ in the fundamental domain for $\Gamma_0(N)$. 

We can make this more precise using arguments based on Section~2.4 of~\cite{shimura-automorphic}.  For instance, let us take~$N=31$ and consider the forgetful map~$X_0(31)\rightarrow X_0(1)$. We recall that we can view every non-cuspidal point of~$X_0(M)$ as a pair~$(E,C)$ where~$E$ is an elliptic curve and~$C$ is a cyclic subgroup of~$E$ with order~$M$. The forgetful map sends~$(E,C)$ to~$(E,\mathbf{1})$, where~$\mathbf{1}$ is the trivial subgroup. If~$N$ is prime then this map is unramified at the cusp~$\infty$ and ramified~$N$ times at the cusp~$0$.

The elliptic points of~$X_0(N)$ lie over the elliptic points for~$\SL_2(\ZZ)$;
they are the points with ramification index~1. There are also points of~$\HH$ with ramification index~2 (over~$i$) and~3 (over~$\omega$); these are not elliptic points for~$\Gamma_0(N)$.

If~$N=31$, then the congruence subgroup~$\Gamma_0(31)$ has index~32
in~$\SL_2(\ZZ)$, so the point~$\omega$ splits into two elliptic points with
ramification index~1 each, and~10 non-elliptic points each with ramification
index~3. This means in particular that~$E_4$, viewed as a modular oldform
for~$G:=\Gamma_0(31)$, has simple zeroes at both of the elliptic points.

This will be a nontrivial example as $G$ has genus~2, so there are known to be nontrivial entire generalized modular forms. $G$ has index 32 in $\SL_2(\ZZ)$
and two inequivalent elliptic points ($P$ and $Q$), so the valence formula looks like
\[
v_0(f) + v_\infty(f) + \frac{1}{3}\cdot [v_P(f) + v_Q(f)] + \cdots = \frac{32k}{12}= \frac{8k}{3},
\]
where the dots are a sum over the other inequivalent points of the fundamental
domain.

It is well-known that modular forms of weight at least 2 can be computed efficiently using algorithms based upon the arithmetic of modular symbols. This gives us an algorithm to compute weight~1 modular forms, which we will sketch here (this technique goes back to the work of Buhler in~\cite{buhler-icosahedral}, where he computes the icosahedral cusp form of weight~1 and level~800).

Let~$f$ be a modular form of weight~1 that we wish to compute and let~$\{g_i\}$ be a set of known modular forms, such as Eisenstein series. We find the images~$f \cdot g_i$, which lie in spaces with higher weight, and because the image spaces have weight at least~2, we can compute them efficiently. This has been implemented in the computer algebra system {\sc Magma}~\cite{magma}, and could also be done by hand in {\sc Sage}.

So for $k=1$ we find using {\sc Magma} that there is a unique cusp form $f$ of weight 1 with Fourier expansion at infinity given by
\[
q - q^2 - q^5 - q^7 + q^8 + q^9 + q^{10} + O(q^{12})
\]
and because we know it has zeroes at~0 and~$\infty$ and the weighted sum of the zeroes is~$8/3$ we see that it must have unique zeroes at 0 and infinity and two zeroes at the elliptic points. 

We can prove that~$f$ has a simple zero~$P$ and a simple zero at~$Q$ by computation. Above we showed that~$E_4$ has simple zeroes at~$P$ and~$Q$. We now use {\sc Magma} to check that~$\Delta \cdot E_4 / f$ is a modular form in~$M_{15}(\Gamma_0(31),\left(\frac{\cdot}{31}\right))$; this space of modular forms has dimension~39 so the computation takes a few seconds on a modern computer. The fact that this quotient is holomorphic means that~$f$'s zeroes at the two elliptic points have order at most~1, so they must have order exactly~1.

So if $g$ is a generalized modular form for $\Gamma_0(31)$ with zeroes and poles supported at the cusps and the elliptic points, and the zeroes or poles at the elliptic points have the same order, then $g$ is a classical modular form, because we can multiply it by a power of $f$ to kill off the zeroes/poles at $P$ and $Q$, and then we use Theorem~\ref{wissam-theorem} to show that it's classical. 

We could also use the fact that the Atkin-Lehner operator~$w_{31}$ exchanges
the two elliptic points of~$G$ and acts by multiplication by~$-1$ on~$f^2$,
so~$f^2$ has double zeroes at both infinity and~0 and zeroes of the same order at each of the elliptic points. We have now located all of the zeroes, and as above we can extend Theorem~\ref{wissam-theorem} to include the case where the divisor is supported at the elliptic points with equal vanishing at both of them.

In a similar way, we can find the zeroes of forms~$f$ for the
groups~$\Gamma_0(N)$ given in Figure~\ref{gamma-0-n-table} by either using the
Atkin-Lehner operator to find the zeroes at the cusps or by showing that
quotients of the form~$E_4 \cdot E_6 \cdot \Delta(q)\Delta(q^N)/f$ are actually
modular forms. If the latter holds, then we know that~$f$ can only vanish at
the cusps and points above the elliptic points for~$\SL_2(\ZZ)$, and by
further calculations of this type we can show that~$f$ actually vanishes at
the elliptic points of~$\Gamma_0(N)$. We summarize our results in Figure~\ref{gamma-0-n-table}.

\begin{figure}[ht!]
\begin{center}
\begin{tabular}{|c|c|c|c|c|c|c|c|}
\hline
$N$ & 0 & $\infty$ & $C_1$ & $C_2$ & $i$ & $\omega$ \\
\hline
1  & -- & 1 & -- & -- & 1 & 0 \\
1  & -- & 1 & -- & -- & 0 & 1 \\
2  & 1 & 1 & -- & -- & 1 & 0 \\
3  & 1 & 1 & -- & -- & 2 & 0 \\
5  & 1 & 1 & -- & -- & 2 & 0 \\
7  & 1 & 1 & -- & -- & 0 & 2 \\
13 & 3 & 1 & -- & -- & 0 & 2 \\
17 & 1 & 1 & -- & -- & 2 & 0 \\
19 & 1 & 1 & -- & -- & 0 & 4 \\
21 & 1 & 1 & 1  & 1  & 0 & 4 \\
26 & 2 & 2 & 1  & 1  & 2 & 0 \\
29 & 2 & 2 & -- & -- & 2 & 0 \\
31 & 2 & 2 & -- & -- & 0 & 4 \\
34 & 3 & 3 & 1  & 1  & 2 & 0 \\
39 & 3 & 3 & 1  & 1  & 0 & 4 \\
41 & 3 & 3 & -- & -- & 2 & 0 \\
49 & 1 & 1 & \dag & \dag & 0 & 4\\
50 & 2 & 2 & 1\dag & 1\dag & 2 & 0\\
\hline
\end{tabular}
\caption{Cuspforms defined over~$\QQ$ with zeroes only at the cusps and the elliptic points}\label{gamma-0-n-table}
\end{center}
\end{figure}

In this table the numbers given are the orders of vanishing at the given
places, where~$C_1$ and~$C_2$ are cusps which are not equivalent to~0
or~$\infty$ and~$i$ and~$\omega$ are elliptic points above~$i$ or~$\omega$
respectively. The \dag entries for~$N=49$ and~$N=50$ indicate that the forms
there have simple zeroes for all of the cusps which are not~0 or~$\infty$.

For completeness, we include modular forms for congruence subgroups of
genus~0 and prime level, although these are not of direct interest in our
current work. The entries for~$N=1$ and~$N=2$ are unusual because there is
only one elliptic point for these levels, and for level~1 there is only one cusp.

We note here that the modular form~$f$ we considered above has no ``free
zeroes''. This term was used in~\cite{petersson-nullstellen} to mean zeroes
that are not forced; for instance, a cusp form must have at least one zero at
each of the cusps, and modular forms for certain congruence subgroups which
have certain weights must have zeroes at the elliptic points of those
subgroups; these are all examples of non-free zeroes. Examples of unforced
zeroes in this sense are those for~$N=34$; the valence formula
does not force there to be zeroes at elliptic points, and the Atkin-Lehner operator
does not act as a scalar so the extra zeroes at the cusp~$0$ are unforced.

There are also cases where we can explicitly identify all of the zeroes except one. If we have $N=28$ then we find that~$\Gamma_0(28)$ has index~8, the space of weight~2 cusp forms has dimension~2 and there are 6 cusps, so there is a cusp form with rational Fourier expansion beginning~$q^2 - q^4 - 2q^6 + q^8 + O(q^{12})$ which has two zeroes at~$\infty$, one zero at each of the other cusps, and one other zero which we will call~$Z_0$. 

Similarly, if~$N=47$ then there is a modular form of weight~1 with
character the Legendre symbol modulo~47, which has two zeroes
at infinity, one zero at the cusp~0, and one other zero ($Z_1$), and has
rational Fourier coefficients.  So any generalized modular form for
$\Gamma_0(28)$ or $\Gamma_0(47)$ which vanishes only at infinity, 0 or~$Z_i$
(for the correct~$i$) and has rational Fourier coefficients is a classical modular form.

These particular lines of attack will not work on many more general cases because we are relying on a numerical coincidence; the number of cusps of~$\Gamma_0(N)$ goes up as a function of the number of divisors of~$N$, whereas the number of zeroes of a form~$f$ for~$\Gamma_0(N)$ goes up as a function of~$N$; as~$N$ increases, there will be too many zeroes to be accounted for by the cusps and elliptic points.

\section{Limitations of the classification theorem}

In this section we will consider some conjectural limitations of the
extension of the classification theorem that we proposed in the last
section. We will take~$N=p \ge 17$ to be prime here, and~$f$ to be
a modular form for~$\Gamma_0(p)$ of weight~2.

In this situation, we have two cusps, traditionally represented by~$\infty$
and~$0$. We also have either~2 or~4 elliptic points (we will ignore the case
where we have no elliptic points, which is when~$p\equiv 11\mod12$).

In~\cite{ahlgren-masri-rouse}, the question of how large the maximal order of
vanishing of a modular form at~$\infty$ can be is raised; for weight~2 forms
of prime level~$\Gamma_0(N)$, this is known to be the genus of~$X_0(N)$, and
this is conjectured to be true for squarefree~$N$ (this is \emph{not} true for non-squarefree levels, the smallest example
being~$N=54$). Because the Atkin-Lehner operator exchanges the cusps in prime
levels, this means that the maximum order of vanishing at the cusp~$0$ must
also be the genus of~$X_0(N)$. Finally, we can see that there is a unique form with rational coefficients with maximal
order of vanishing at a given cusp (if there were two such, their difference
vanishes to a higher order at that cusp, which is a contradiction).

A sensible type of weight~2 form~$f$ to look for is one which has a zero of maximum order
at both~$\infty$ and~$0$; from the previous paragraph we see that this must be
an eigenform for the Atkin-Lehner operator. We must now consider the different
congruence classes that~$p$ can lie in separately.

In the case that~$p \equiv 7\mod12$ the elliptic points lie above~$\omega$,
and the valence
formula tells us that~$f$ must have~$4/3$ zeroes away from the cusps, so it
must have a zero at an elliptic point, and the action of Atkin-Lehner can be
shown by computation to switch the elliptic points, so it must have a zero at the other point, so it must
have four zeroes at the elliptic points, and as it is an eigenform for
Atkin-Lehner it must have two zeroes at each point.

In the case~$p \equiv 5\mod12$, the elliptic points lie above~$i$, and the
valence formula tells us that there will be a a zero or zeroes of total
weight~1. As~$f$ is an eigenform for the Atkin-Lehner operator, there are two
possibilities; either $f$ has a simple zero at both of the elliptic points
over~$i$ (which each have weight~$1/2$), or it has a simple zero at one of the fixed points of the
Atkin-Lehner transformation ($z^2=1/p$).

In the final case, where~$p \equiv 1\mod12$, there are four elliptic points, two above
both~$i$ and two above~$\omega$, and so the analysis is simply a combination of the two
paragraphs above. The extra zeroes not accounted for by the cusps have
weight~$7/3$, so there are clearly elliptic points over~$\omega$ and there may
be elliptic points over~$i$.

Finding such a form can be done experimentally by computing a rational basis for the
space $S_2(\Gamma_0(p))$ and then checking to see if the unique form with
largest vanishing at~$\infty$ is an eigenform for~$w_p$. This can be done
efficiently with \Magma{} or \Sage{}.

However, forms with maximum vanishing at the cusp~$\infty$ are very rarely eigenforms for~$w_p$; if the Atkin-Lehner
operator acts on the space~$S_2(\Gamma_0(p))$ as multiplication by a scalar, then they are
forced to be, but otherwise this doesn't often happen. Table~5 of
Antwerp~IV~\cite{antwerp-IV} (this has been extended by Kohel~\cite{antwerp-Iv-kohel}) gives the dimension of eigenspaces for
the Atkin-Lehner involution; we see that in this table there are only a handful of these
where~$w_p$ acts as multiplication, all of which are discussed
in the section above. It is plausible that our table actually gives a complete list.

We also notice that the list given in Figure~\ref{gamma-0-n-table} is, with
one interesting exception, contained within the list given in Theorem~2
of~\cite{ogg-hyperelliptic-modular-curves}; the table in Ogg gives the levels~$N$ for which
the modular curve~$X_0(N)$ is hyperelliptic and the Atkin-Lehner involution is
also the hyperelliptic involution~$\backsim$. These are also the integers~$N$ for
which~$X_0(N)/\!\!\backsim$ has genus~0, which play an interesting and surprising
r\^ole in the subject of ``Monstrous
moonshine''~\cite{borcherds-moonshine}. This also lends weight to our
suggestion that we have a complete list of such forms.

The exception is~$N=34$, which is not
hyperelliptic, and where the Atkin-Lehner involution does \emph{not} act as
multiplication by a scalar, but there is a modular form with maximum vanishing at the
cusp~$\infty$ which is an eigenform for the Atkin-Lehner operator.

We therefore ask the following question:
\begin{question}
Let~$p$ be a prime number which is not congruent to~11 modulo~12. If~$f$ is a
modular form for~$\Gamma_0(p)$ which vanishes at the cusps and at the elliptic
points and nowhere else, is its level one of those given in Table~\ref{gamma-0-n-table}?
\end{question}

Clearly we can ask similar questions for non-prime levels, and we expect that
the answer will be similar in both cases. 

We can also ask a weaker question:
\begin{question}
Let~$P$ be the set of prime numbers which are not congruent to~11 modulo~12,
and let~$Q$ be the set of primes~$p$ in~$P$  for which there exists a modular
form~$f$ for~$\Gamma_0(p)$ which vanishes at the cusps and at the elliptic
points and nowhere else. Does~$Q$ have density~0 in~$P$?
\end{question}

As before, we can reasonably generalize this to arbitrary squarefree natural numbers~$N$;
for non-squarefree numbers the situation is less clear, as we noted above that
the maximum order of vanishing can be larger than the genus of~$X_0(N)$.

\section{Acknowledgements}

The first-named author would like to thank the American University of Beirut
for its hospitality during the Conference on Modular Forms and Related Topics in June 2009 and the Heilbronn
Institute and the University of Bristol for supporting his research. He would
also like to thank Ken McMurdy for many helpful conversations, Kamal
Khuri-Makdisi for his help during a research visit to the University of
Bristol, and William Stein for helpful conversations. Thanks are also due to Robin Chapman for a helpful suggestion given via \url{MathOverflow.net}.

The second-named author would like to thank the American University of Beirut for its support for his research.


\end{document}